\newcommand{\be}{\begin{eqnarray}}
\newcommand{\ee}{\end{eqnarray}}
\newtheorem{theo}{Theorem}
\newtheorem{lemma}{Lemma}
\newcommand{\R}{\mathbb R}
\begin{document}

\title{Non-Existence and Existence of Shock Profiles\\ in the Bemfica-Disconzi-Noronha Model}
\author
{\it Heinrich Freist\"uhler\thanks{Department of Mathematics, University of Konstanz, 
78457 Konstanz, Germany. Supported by DFG Grant No.\ FR 822/10-1.}
}

\date{March 27, 2021}

\maketitle

\begin{abstract}
This note studies a four-field hyperbolic PDE model that was recently introduced by Bemfica, Disconzi, 
and Noronha for the pure radiation fluid with viscosity, and asks whether shock waves 
admit continuous profiles in this description. The model containing two free parameters $\mu,\nu$ 
and being causal whenever one chooses $(\mu,\nu)$ from a certain range $\mathcal C\subset\R^2$, 
this paper shows that for any choice of $(\mu,\nu)$ in the interior of $\mathcal C$,  
there is a dichotomy in so far as (i) shocks of sufficiently small 
amplitude admit profiles and (ii) certain other, thus necessarily non-small, shocks do not.
This finding does not preclude the possibility that if one chooses $(\mu,\nu)$ from a specific
part $\mathcal S$ of the boundary of $\mathcal C$, the dichotomy disappears and \emph{all} shocks 
have profiles; 
the parameter set $\mathcal S$ corresponds to the ``sharply causal'' case, in which one of the 
characteristic speeds of the dissipation operator is the speed of light.  
\end{abstract}
\newpage
\section{Introduction}
In their admirable 2018 paper \cite{BDN}, Bemfica, Disconzi, and Noronha have proposed 
a four-field PDE formulation 
\be\label{NS}
\partial_\beta(T^{\alpha\beta}+\Delta T^{\alpha\beta})=0
\ee
for the dynamics of the pure radiation fluid, ideally
\be
T^{\alpha\beta}=\frac{\partial (p(\theta)\psi^\alpha)}{\partial \psi_\beta}
=\theta^3p'(\theta)\psi^\alpha\psi^\beta+p(\theta)g^{\alpha\beta},
\quad
\psi^\gamma=\frac{u^\gamma}\theta,
\quad 
p(\theta)=\frac13\theta^4,
\ee
with a dissipation tensor
\be
\label{DeltaT}
\Delta T^{\alpha\beta}=
-B^{\alpha\beta\gamma\delta}(\psi)
\displaystyle{\frac{\partial\psi_\gamma}{\partial x^\delta}}
\ee
in which the classical Eckart viscosity ansatz \cite{E}
\be
\begin{aligned}\label{Bvisc}
B^{\alpha\beta\gamma\delta}_{visc}&=
\Pi^{\alpha\gamma}\Pi^{\beta\delta}+\Pi^{\alpha\delta}\Pi^{\beta\gamma}
-\frac23\Pi^{\alpha\beta}\Pi^{\gamma\delta}\quad\text{with}\quad
\Pi^{\alpha\gamma}=u^\alpha u^\beta+g^{\alpha\beta}
\end{aligned}
\ee
is augmented as\footnote{Our notation is slightly different from theirs.
We write $B_{ther},B_{velo}$ for the two characteristic pieces of the augmentation in order to 
make their relation to 
first-order equivalence transformations \cite{FT18} apparent, and use the thermodynamically 
natural Godunov variable $\psi^\gamma$ as in \cite{RS,FT14,F20}.} 
\be\label{Bis}
B^{\alpha\beta\gamma\delta}
=\eta
B^{\alpha\beta\gamma\delta}_{visc}-
\mu B^{\alpha\beta\gamma\delta}_{ther}-\nu B^{\alpha\beta\gamma\delta}_{velo}
\ee
with
\be\label{Bthervelo}
B^{\alpha\beta\gamma\delta}_{ther}=
(3 u^\alpha  u^\beta+\Pi^{\alpha\beta})
(3 u^\gamma u^\delta+\Pi^{\gamma\delta}),
\quad
B^{\alpha\beta\gamma\delta}_{velo}=
(u^\alpha {\Pi^\beta}_\epsilon+ u^\beta{\Pi^\alpha}_\epsilon) 
(u^\gamma\Pi^{\delta\epsilon}+ u^\delta\Pi^{\gamma\epsilon}).
\ee
Like that of a different augmentation introduced by B.\ Temple and the author in \cite{FT14,FT18},
the purpose of the combination \eqref{Bis} is to make the dissipation causal and thus remedy a 
well-known deficiency of the Eckart theory.\footnote{Recall Israel's work \cite{HL} for an early definitive 
statement of this deficiency and the first stringent attempt to handle it. Many others followed, cf.\ 
the introductions of \cite{FT14} and \cite{F20}.} 
It was shown in \cite{BDN} that 
this formulation, which we will here briefly refer to as `the BDN model', is indeed causal if and only if, 
relative to the classical coefficient $\eta$ of viscosity, the
coefficients $\mu$ and $\nu$ of the ``regulators'' 
$ 
B^{\alpha\beta\gamma\delta}_{ther},
B^{\alpha\beta\gamma\delta}_{velo}
$ 
satisfy
\be\label{generallycausal}
\mu\ge \frac43\eta\quad\text{and}\quad 
\nu\le\left(\frac1{3\eta}-\frac1{9\mu}\right)^{-1}.
\ee
The present note focusses on shock waves, whose ideal version is given by discontinuous  
solutions to the corresponding inviscid, Euler, equations
\be\label{E}
\partial_\beta T^{\alpha\beta}=0
\ee
of the (prototypical) form 
\be\label{lsw}
\psi(x)=\begin{cases}
       \psi_-,&x^\beta \xi_\beta<0,\\
       \psi_+,&x^\beta \xi_\beta>0,
       \end{cases}
\ee
and asks whether they can be properly represented in the viscous setting \eqref{NS}.
A standard way to achieve such representation of a `viscous shock wave' is a `dissipation profile', i.e., 
a regular solution of \eqref{NS} that depends also only on $x^\beta \xi_\beta$
and connects the two states forming the shock, in other words, a solution of the ODE  
\be\label{profeq}
\xi_\beta\xi_\delta B^{\alpha\beta\gamma\delta}(\psi)\psi_\gamma'
=
\xi_\beta T^{\alpha\beta}(\psi)-q^\alpha,
\quad 
q^\alpha:=\xi_\beta T^{\alpha\beta}(\psi_\pm),
\ee
on $\R$ which is heteroclinic to them,
\be
\label{hetero}
\hat\psi(-\infty)=\psi_-,\quad  
\hat\psi(+\infty)=\psi_+.
\ee
The technical main purpose of this note is to show the following.
\begin{theo}
For any choice of the coefficients $\eta,\mu,\nu>0$, \\ 
(i) every Lax shock 
of sufficiently small amplitude possesses a dissipation profile, and \\ 
(ii) if the coefficients satisfy the strict causality condition
\be\label{strictlycausal}
\mu\ge \frac43\eta\quad\text{and}\quad 
\nu< \left(\frac1{3\eta}-\frac1{9\mu}\right)^{-1},
\ee
then there always exist other shock waves that do not admit a dissipation profile.
\end{theo}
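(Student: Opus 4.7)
The first step is to put the profile ODE \eqref{profeq} into a form amenable to dynamical-systems analysis. Fixing $\xi$ and writing $A(\psi):=\xi_\beta\xi_\delta B^{\alpha\beta\gamma\delta}(\psi)$, I would analyse the rank, kernel and image of $A$ in the BDN model from the algebraic structure of $B_{visc},B_{ther},B_{velo}$. One expects $A$ to be degenerate on a fixed constraint subspace related to the Godunov nature of $\psi$; after restricting the unknown to the dynamical complement and using the algebraic consistency relations to eliminate the remaining components, \eqref{profeq} becomes a regular ODE of the form $\psi'=M(\psi;\xi)^{-1}g(\psi;\xi,q)$ at all states that are non-characteristic for the dissipation operator. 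I would also record, from \cite{BDN}, the maximal characteristic speed $c^\ast(\eta,\mu,\nu)$ of that operator; under \eqref{strictlycausal} one has $c^\ast<1$, while $c^\ast=1$ precisely on the ``sharply causal'' set $\mathcal S$.

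For part (i) I would follow the Majda--Pego / Freist\"uhler--Szmolyan small-amplitude strategy. Fix a base state $\psi_0$, let $\eps=|\psi_+-\psi_-|$, and rescale $\psi=\psi_0+\eps\varphi(\eps\cdot)$. Causality of the BDN model gives, in the limit $\eps\downarrow 0$, a sign-definite reduced dissipation on the tangent to the equilibrium manifold at $\psi_0$, which combined with the genuine nonlinearity of the acoustic (and only genuinely nonlinear) field of the radiation fluid yields, on a one-dimensional centre manifold, a Burgers-type scalar ODE whose phase portrait trivially contains a heteroclinic orbit between the two endstates for every Lax shock. Persistence of the transversal intersection of unstable and stable manifolds then gives \eqref{hetero} for all shocks of sufficiently small amplitude.

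For part (ii), I would exhibit shocks of the Euler system \eqref{E} whose speed $s$ lies in the non-empty open interval $(c^\ast,1)$. Such shocks exist because the Rankine--Hugoniot curves of the radiation fluid reach speeds arbitrarily close to the speed of light as the amplitude grows; concretely I would parametrise the Hugoniot locus through a strong shock and read off $s$ as a continuous function of the amplitude ranging over $[1/\sqrt{3},1)$. At any such $\psi_\pm$, the matrix $A(\psi_\pm)$ that multiplies $\psi'$ in the linearisation of the profile ODE has, on the relevant dynamical subspace, an eigenvalue of the ``wrong'' sign relative to the ideal principal symbol, because $s^2$ exceeds the squared eigenvalues of the dissipation pencil. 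Consequently the spectrum of the linearised profile vector field at $\psi_\pm$ has stable and unstable dimensions that no longer satisfy the Lax count; the candidate unstable manifold at $\psi_-$ and stable manifold at $\psi_+$ cannot intersect to form a connecting orbit, contradicting the existence of any profile.

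The hard part, and the main obstacle, is the reduction step: because $B$ is highly degenerate by design, one must make explicit in which sense \eqref{profeq} is an ODE at all, and then verify that the ``$s>c^\ast$'' obstruction in part (ii) is really detected by the correctly reduced system rather than destroyed by the projection. A related technical point is to translate the dispersion-relation characteristic speeds computed in \cite{BDN} into eigenvalues of the pencil that governs the linearised profile equation at the endstates, so that the Lax-index counting argument can be run rigorously.
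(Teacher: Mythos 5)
Your part (i) is essentially the paper's argument (a Majda--Pego centre-manifold reduction at the zero-amplitude bifurcation point), and your worry about having to desingularize the profile equation there is unnecessary: after restriction to the two dynamical fields, the paper simply checks that $\tilde B_{visc}(\psi_*)$ and $\mu B_{ther}(\psi_*)+\nu B_{velo}(\psi_*)$ are multiples of orthogonal projectors with \emph{different} one-dimensional kernels, so $B(\psi_*)$ is invertible and the profile equation is a genuine ODE near the weak-shock limit. For part (ii) you have also identified the right mechanism: normalizing to a standing shock, the upstream flow speed $v_-/\sqrt{1+v_-^2}$ tends to $1$ as the amplitude grows and therefore eventually exceeds the largest characteristic speed of the dissipation pencil whenever that speed is subluminal, i.e.\ precisely away from the sharply causal tuning \eqref{nuisnu*}; in the paper this is the statement that the leading coefficient of $\det B(\psi_-)$ as a polynomial in $v_-^2$ is then nonzero, so that $\det B(\psi_-)$ changes sign for strong shocks.

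However, your concluding step has a genuine gap: from ``the stable and unstable dimensions no longer satisfy the Lax count'' you cannot infer that the unstable manifold of $\psi_-$ and the stable manifold of $\psi_+$ fail to intersect. A deficient index only makes connections non-generic (positive codimension), and the shocks here form a one-parameter family, so isolated connections are not excluded by counting alone. What actually closes the argument, and what the paper proves, is stronger: at $\psi_-$ one has $\operatorname{tr}B<0$ and, for strong shocks in the strictly causal case, $\det B(\psi_-)>0$, so $B(\psi_-)$ is negative definite; since the flux Jacobian at $\psi_-$ is positive definite (both Euler characteristic speeds are positive there, with determinant $2v_-^2-1>0$), the linearization $B(\psi_-)^{-1}A(\psi_-)$ has two negative eigenvalues and $\psi_-$ is a hyperbolic \emph{attractor}. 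Its unstable manifold is trivial, so no orbit whatsoever can have $\psi_-$ as its $\alpha$-limit --- that is the non-existence statement. To run your version rigorously you must therefore show that \emph{both} eigenvalues flip (the upstream speed passes \emph{all} dissipation characteristics, not just one of them), rather than that ``an eigenvalue has the wrong sign''; note also that the obstruction lives only at the upstream state $\psi_-$, not at $\psi_+$, where the flow is subsonic and nothing degenerates, and that the speed to compare with the rest-frame dissipation speeds is the shock speed relative to the upstream fluid, not a lab-frame speed.
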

Besides the dichotomy that Theorem 1 states so distinctly, we draw the reader's attention to
the slight difference between the two causality conditions \eqref{generallycausal} and 
\eqref{strictlycausal}. To understand its meaning note that  
all characteristic speeds $\sigma$ 
of the operator 
$$
B^{\alpha\beta\gamma\delta}
\displaystyle{\frac{\partial^2\psi_\gamma}{\partial x^\beta\partial x^\delta}}
$$
satisfy $0\le \sigma^2\le 1$ if and only if both inequalities 
in \eqref{generallycausal} hold, and one of the speeds is luminal, $\sigma^2=1$, if and only 
if 
\be\label{sharplycausal}
\mu\ge \frac43\eta\quad\text{and}\quad 
\nu=\left(\frac1{3\eta}-\frac1{9\mu}\right)^{-1},
\ee
a condition that we therefore call \emph{sharp causality}.  

Obviously, Theorem 1 has the following two implications :
\goodbreak

(A) In the strictly causal case \eqref{strictlycausal}, upon variation from 
small to large amplitudes, necessarily some kind of transition occurs in the phase portrait 
of the dynamical system \eqref{profeq}.

(B) As it is well possible (though not formally proved in this paper, cf.\ Remark 1 at the end of 
Section 3)
that \emph{all} shocks have profiles when 
\be\label{nuisnu*}
\nu=\nu_*(\eta,\mu)=\left(\frac1{3\eta}-\frac1{9\mu}\right)^{-1},
\ee
it seems that for the modelling one might prefer this sharply causal tuning over others.

Both (A) -- cf.\ Figure 1 -- and (B) are under further investigation \cite{P}.

\vskip 1cm
\begin{figure}[H]
	\begin{center}
		\vspace{-1cm}
		\includegraphics[trim =60mm 60mm 40mm 50mm ,scale=0.57]{./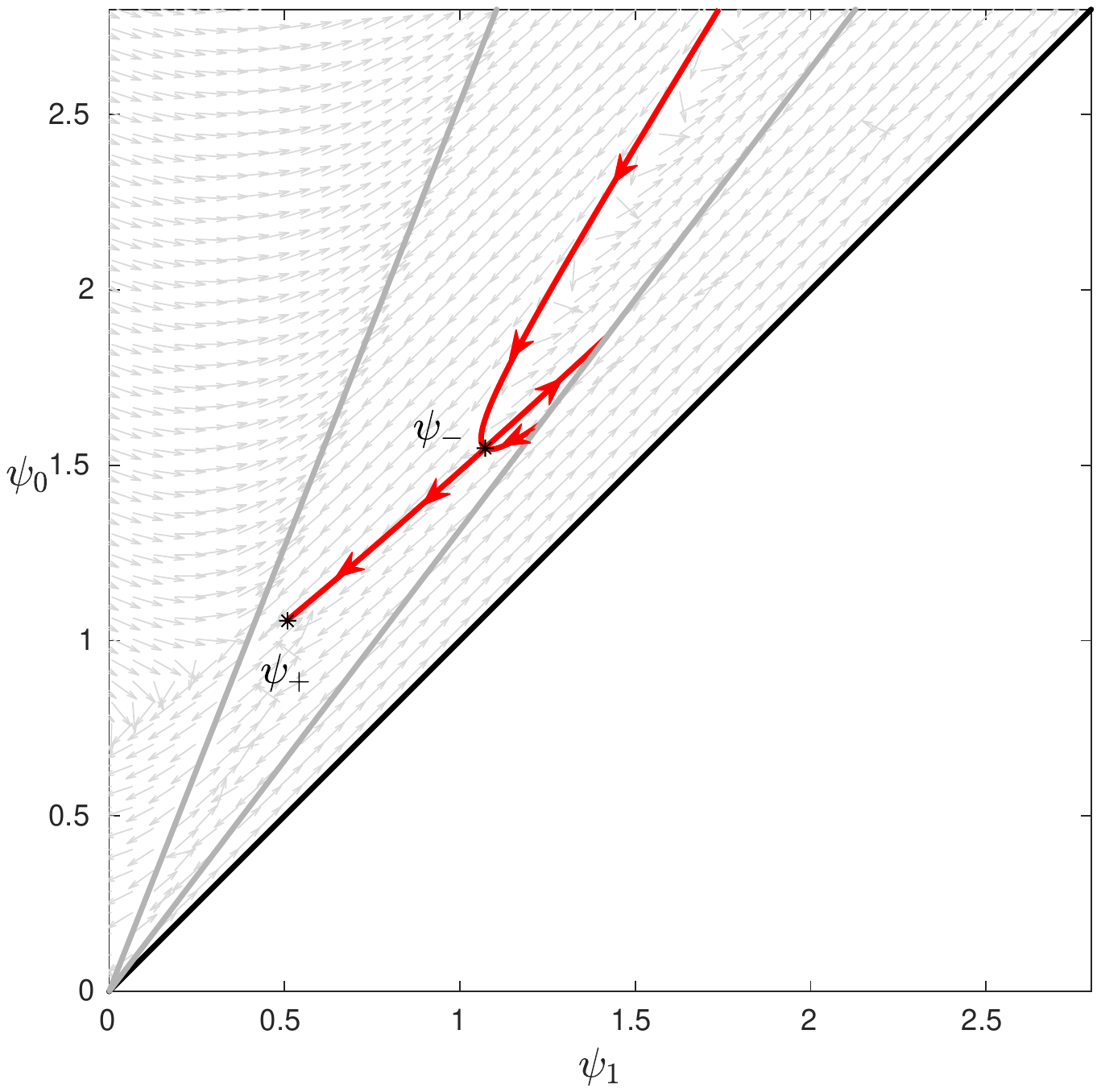}
		\quad
		\includegraphics[trim =30mm 60mm 60mm 56mm ,scale=0.57]{./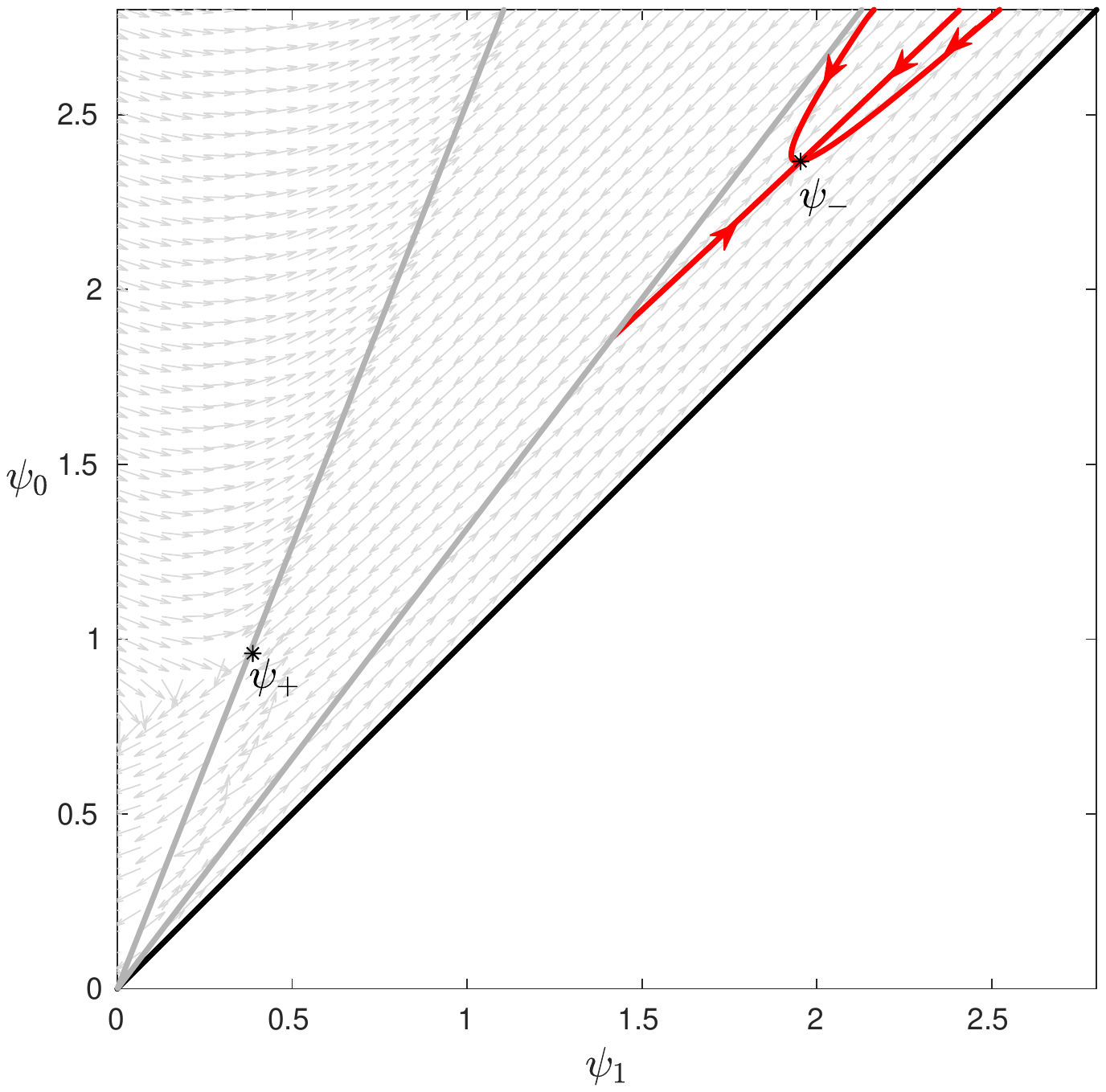}
  \caption{Phase portrait of \eqref{profeq} for strictly causal parameter values $(\eta,\mu,\nu)=(1,7,20)$. 
   Left: $\tilde q=31/40$\ . Right: $\tilde q=34/40$. 
Gray lines indicate det $B=0$. Plots by V. Pellhammer.}
	\end{center}
\end{figure} 

Dissipation profiles for fluid dynamical shock waves have been widely studied.
For a selection of aspects, including the interesting (though quite different) 
heteroclinic bifurcation in standard magnetohydrodynamics, the interested reader
is referred to \cite{We,Gi,MP,FT14,CS,FS,FP20}.

As regards the present paper, we parametrize the ideal shock waves of the pure radiation fluid  
in Section 2, Section 3 contains the proof of the theorem, and a brief appendix 
concisely reconsiders the causality conditions \eqref{generallycausal}, \eqref{sharplycausal}.

\section{Rankine-Hugoniot conditions of pure radiation}
\setcounter{equation}0
Due to Lorentz invariance, we can restrict attention to shocks of speed $s=0$, and because
of the system's natural isotropy, we may assume that the spatial direction of propagation 
is $(1,0,0)$. Correspondingly, we fix the spatiotemporal normal to the shock as 
$$(\xi_0,\xi_1,\xi_2,\xi_3)=(0,1,0,0),$$ 
henceforth consider only states $\psi$ with $\psi_2=\psi_3=0$, and 
let the indices run over 0 and 1 instead from 0 to 3; thus, for instance, 
the metric $g^{\alpha\gamma}$ and 
the projection $\Pi^{\alpha\gamma}=g^{\alpha\gamma}+u^\alpha u^\gamma$
on the orthogonal complement of the velocity $u^\alpha$  
are given by the matrices
$$
\left( g^{\alpha\gamma}\right)
=\begin{pmatrix}
-1&0\\0&1
\end{pmatrix},
\quad
\left(\Pi^{\alpha\gamma}\right)
=\begin{pmatrix}
 u_1^2&u^0 u^1\\
 u^0 u^1&u_0^2 
 \end{pmatrix}.
$$
On the state space $\Psi\equiv\{\psi=(\psi^0,\psi^1)\in\R^2:\psi^0>|\psi^1|\}$, we use 
the temperature and the velocity, 
$$
\theta=(-\psi_\gamma\psi^\gamma)^{-1/2},\quad
(u,v):=(u^0,u^1)=\theta(\psi^0,\psi^1)\ \text{with }u=(1+v^2)^{1/2},
$$ 
as coordinates. All possible shocks can be identified by screening the preimage set 
of the $\xi_\beta$ component of the ideal stress $T^{\alpha\beta}$ \cite{D,T}:
\begin{lemma}
On $\Psi$, the equation 
\be\label{Tisq}
T^{\alpha1}(\psi)=q^\alpha
\ee
has more than one solution if and only if 
$$
q^1>0
\quad\text{and}\quad
(q^1)^2<(q^0)^2<\frac2{\sqrt3}(q^1)^2.
$$ 
In that case, it has exactly two, and these two states form a standing Lax shock 
in right-moving or left-moving flow if $q^0>0$ or $q^0<0$, respectively.  
\end{lemma}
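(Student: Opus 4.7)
My approach is to reduce the vector equation \eqref{Tisq} to a single scalar equation in the velocity $v$ alone, by parametrizing $\Psi$ with $(\theta,v)$ so that $u^0=\sqrt{1+v^2}$ and $u^1=v$. From the form $T^{\alpha\beta}=\tfrac{4}{3}\theta^4 u^\alpha u^\beta+\tfrac{1}{3}\theta^4 g^{\alpha\beta}$ one reads off
\[
T^{01}=\tfrac{4}{3}\theta^4\, v\sqrt{1+v^2}, \qquad T^{11}=\tfrac{1}{3}\theta^4(1+4v^2).
\]
Two facts are immediate: $T^{11}>0$ on all of $\Psi$, so solvability of \eqref{Tisq} forces $q^1>0$; and $\operatorname{sgn}(T^{01})=\operatorname{sgn}(v)$, so the sign of $q^0$ pins down the sign of $v$ (with $v=0$ iff $q^0=0$).

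Given $q^1>0$, I would use $T^{11}=q^1$ to eliminate the temperature as $\theta^4=3q^1/(1+4v^2)$, which is automatically positive, and then substitute into $(T^{01})^2=(q^0)^2$ to obtain the scalar problem
\[
f(v^2)=\frac{(q^0)^2}{(q^1)^2}, \qquad f(x):=\frac{16\,x(1+x)}{(1+4x)^2}.
\]
A short calculation gives $f'(x)=16(1-2x)/(1+4x)^3$, so $f$ is strictly increasing on $[0,1/2]$ and strictly decreasing on $[1/2,\infty)$, with $f(0)=0$, $f(x)\to 1$ as $x\to\infty$, $f(1/8)=1$, and maximum value attained at $x=1/2$. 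Hence $f(x)=r$ has exactly two positive roots, one in $(1/8,1/2)$ and one in $(1/2,\infty)$, precisely when $r$ lies strictly between $1$ and $f(1/2)$; each such root, combined with the already-fixed sign of $v$, produces a distinct $\psi\in\Psi$. This is the stated range for $(q^0/q^1)^2$, and no pair $(q^0,q^1)$ outside it yields more than one solution.

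For the Lax claim I would note that the reduced $2\times 2$ ideal system has characteristic speeds obtained by relativistically composing the rest-frame sound speed $\pm 1/\sqrt 3$ with the three-velocity $v/\sqrt{1+v^2}$; demanding a characteristic speed to vanish yields $v^2/(1+v^2)=1/3$, i.e.\ $v^2=1/2$. This sonic value is exactly the argmax of $f$, so the two roots of $f(v^2)=r$ automatically split into one subsonic ($v^2<1/2$) and one supersonic ($v^2>1/2$) state. Taking the supersonic state as the upstream side produces a compressive standing Lax shock, whose flow is rightward or leftward according to whether $q^0>0$ or $q^0<0$; a short separate check verifies that the other characteristic family does not change sign across either state, so only one of the two Lax inequalities is active.

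The argument is essentially an elementary one-variable calculus exercise. The one nontrivial feature, and the main thing to pin down carefully, is the coincidence that the argmax of $f$ coincides with the sonic value $v^2=1/2$; this is what aligns the bifurcation in the root count of $f(x)=r$ with the Lax/sonic threshold, and what makes the stated necessary-and-sufficient condition for multiplicity match exactly the two-root interval for $r$.
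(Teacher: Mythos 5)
Your proof is correct and follows essentially the same route as the paper: your scalar equation $f(v^2)=(q^0/q^1)^2$ is exactly the paper's quadratic $16(1-\tilde q)v^4+8(1-2\tilde q)v^2+1=0$ with $\tilde q=1/r$, the fold sits at the sonic value $v^2=1/2$ in both arguments, and your velocity-addition identification of the sonic threshold is just another reading of the paper's computation $\det\bigl(\partial T^{\alpha1}/\partial\psi_\gamma\bigr)\propto 2v^2-1$. One caveat: your threshold $f(1/2)=4/3$ agrees with the paper's own proof (which yields $3/4<\tilde q<1$, i.e.\ $1<(q^0/q^1)^2<4/3$) but not with the lemma as printed, whose upper bound $\tfrac{2}{\sqrt3}(q^1)^2$ is evidently a typo for $\tfrac43(q^1)^2$ (equivalently $|q^0|<\tfrac{2}{\sqrt3}|q^1|$), so you should not assert without comment that your interval ``is the stated range.''
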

\begin{proof}
As the relation \eqref{Tisq}$_1$, $T^{11}(\psi)=q^1$, is equivalent to 
$$
\frac43v^2+\frac13=q^1\theta^{-4},
$$
its solution set is empty if $q^1\le 0$. If $q^1>0$, it is the curve
$$
H^1(q^1)=\{h^1(v,q^1):v\in\R\}
$$
with
$$
h^1(v,q^1)=\frac1{\theta^1(v,q^1)}\left(\sqrt{1+v^2},v\right),
\quad\text{where } 
\theta^1(v,q^1):=\left(\frac1{q^1}\left(\frac43v^2+\frac13\right)\right)^{-1/4}.
$$
As the relation \eqref{Tisq}$_0$, $T^{01}(\psi)=q^0$, is equivalent to 
$$
\left(\frac43\right)^2(1+v^2)v^2=(q^0)^2\theta^{-8}\quad\text{and}\quad q^0v\ge 0, 
$$
its solution set is $H^0(0)=(0,\infty)\times\{0\}$ if $q^0=0$, but otherwise it is the curve
$$
H^0(q^0)=\{h^0(v,q^0):q^0v>0\}
$$
with
$$
h^0(v,q^0)=\frac1{\theta^0(v,q^0)}\left(\sqrt{1+v^2},v\right),
\quad\text{where } 
\theta^0(v,q^0):=\left(\left(\frac43\right)^2\frac{(1+v^2)v^2}{(q^0)^2}\right)^{-1/8}.
$$
\goodbreak
Being the intersection of the two curves $H^0(q^0)$ and $H^1(q^1)$, 
the solution set $H(q)$ of \eqref{Tisq} can have more than one element only if we have 
$q^0\neq 0$. As \eqref {Tisq} is invariant under the reflection 
$q^0\mapsto -q^0, u^1\mapsto -u^1$, it is w.\ l.\ o.\ g.\ that we henceforth assume 
\be
q^1>0\quad\text{and}\quad q^0>0.
\ee
Observing that 
$$
\begin{aligned}
h^0(v,q^0)=h^1(v,q^1)&\Leftrightarrow \theta^0(v,q^0)=\theta^1(v,q^1)
\\
&\Leftrightarrow (4v^2+1)^2=16\tilde q^2v^2(1+v^2)\\
&\Leftrightarrow (16(1-\tilde q)v^4+8(1-2\tilde q)v^2+1
\end{aligned}
$$
with 
$$
\tilde q=(q^1/q^0)^2,
$$
we find that the equation $h^0(v,q^0)=h^1(v,q^1)$ has two different positive solutions $v_->v_+$
if and only if $$3/4<\tilde q<1$$ and in that case, these solutions are given through 
$$
v_\pm^2(\tilde q)=\frac{(2\tilde q-1)\mp\sqrt{\tilde q(4\tilde q-3)}}{4(1-\tilde q)}. 
$$
We note that 
$$
\lim_{\tilde q\searrow 3/4}v_\pm(\tilde q)=:v_*=\frac1{\sqrt2}
$$
while
$$
\lim_{\tilde q\nearrow 1}v_+(\tilde q)=\frac1{2\sqrt2}\quad\text{and}\quad
\lim_{\tilde q\nearrow 1}v_-(\tilde q)=\infty.
$$
I.\ e., the case $\tilde q\searrow 3/4$ corresponds to 
the zero amplitude and $\tilde q\nearrow 1$ to the 
infinite amplitude limit. 

As \eqref{Tisq} is also invariant under homotheties $(\psi,q) \mapsto(a\psi,a^{-2}q),a>0,$
it again causes no loss of generality when we from now on also assume that  
$
q^1=1. 
$ 
This means that we consider a $\tilde q$-dependent pair of states  
$$
(\psi_-(\tilde q),\psi_+(\tilde q))\quad\text{with }\psi_\pm=
\left(((4/3)v^2+1/3)^{1/4}((1+v^2)^{1/2},v)\right)|_{v=v_\pm(\tilde q)},
\quad
3/4<\tilde q<1,
$$
that originates from the bifurcation point 
$$ 
\psi_*=\frac1{\sqrt2}\left(\sqrt3,1\right)
$$
when $\tilde q$ grows starting from its lower limiting value $3/4$.
Since $v_->v_+>0$, $\psi_-$ as a left-hand (upstream) state and $\psi_+$ as a right-hand 
(downstream) state form a shock that stands in a right-moving decelerating flow.

To confirm that it is a Lax shock, we observe that up to a positive scalar factor, 
the Jacobian 
$$
\frac{\partial T^{\alpha1}}{\partial \psi_\gamma}
=
\theta^2p_\theta
(g^{\alpha1}u^\gamma +g^{\alpha\gamma}u^1 +g^{1\gamma}u^\alpha)
+(\theta^3p_\theta)_\theta u^\alpha u^1 u^\gamma
=\frac43\theta^5\left(
(g^{\alpha1}u^\gamma +g^{\alpha\gamma}u^1 +g^{1\gamma}u^\alpha)
+6 u^\alpha u^1 u^\gamma\right)
$$
is given by the matrix
\be\label{dT}
\begin{pmatrix}
 -v+6u^2v & u+6uv^2\\
 u+6uv^2 & 3v+6v^3 
\end{pmatrix}
=
\begin{pmatrix}
 v(6v^2+5)&\sqrt{1+v^2}(6v^2+1)\\
 \sqrt{1+v^2}(6v^2+1)&v(6v^2+3)
\end{pmatrix},
\ee
whose determinant 
\be\label{detA}
v^2(6v^2+5)(6v^2+3)-(1+v^2)(6v^2+1)^2=2v^2-1
\ee
is positive for $v^2>1/2$ and negative for $v<1/2$. Thus both characteristic speeds 
are positive at $\psi_-$, while for $\psi_+$ one of them is positive and 
the other negative: this defines a 1-shock \cite{Lx}. 
\end{proof}

\section{Shock profiles in the BDN picture}  

\setcounter{equation}0
We write the profile equation 
$$
B^{\alpha1\gamma1}(\psi)\psi_\gamma'=T^{\alpha 1}(\psi)-q^\alpha. 
$$
and 
a \emph{scaled} version of its linearization at a given state $\bar\psi$,
$$
B^{\alpha1\gamma1}(\bar\psi)\psi_\gamma'=
\frac{\partial T^{\alpha1}}{\partial \psi_\gamma}
(\bar\psi)\psi_\gamma,
$$
as
\be\label{profileODE1}
B(\psi)\psi'=F(\psi,\tilde q)
\ee
and 
\be\label{linprofileODE1}
B(\bar\psi)\psi'=A(\bar\psi)\psi, 
\ee
respectively, with
\be\label{B}
B
=\tilde\eta
\tilde B_{visc}-\mu B_{ther}-\nu B_{velo}
\ee
where
\be\label{Bviscpsi}
\tilde B_{visc}(\psi)
=
\begin{pmatrix}
u^2v^2&-u^3v\\-u^3v&u^4
\end{pmatrix},\qquad\tilde\eta=\frac43\eta,\qquad
\ee
\be\label{Bther}
B_{ther}(\psi)
=
\begin{pmatrix}
16u^2v^2&-4uv(4v^2+1)\\ 
-4uv(4v^2+1)&(4v^2+1)^2                                                                                         
\end{pmatrix},
\ee
\be\label{Bvelo}
\begin{aligned}
B_{velo}(\psi)
&=
\begin{pmatrix}
(u^2+v^2)^2 &-2(u^2+v^2)uv\\
-2(u^2+v^2)uv&4u^2v^2
\end{pmatrix},
\end{aligned}
\ee
\goodbreak
and
$$
A(\psi)
=\begin{pmatrix}
 v(6u^2-1)&-u(6v^2+1)\\
 -u(6v^2+1)&v(6v^2+3) 
 \end{pmatrix};
$$
according to \eqref{Bvisc},\eqref{Bthervelo}, and \eqref{dT}.  
(The minus signs in the offdiagonal entries of the above matrices are 
induced by the distinction between contra- and covariant indices, 
$\psi^\epsilon=g^{\epsilon\gamma}\psi_\gamma.$)
\goodbreak

For any $\tilde q\in (3/4,1)$, the two states $\psi_-(\tilde q),\psi_+(\tilde q)$ we have 
identified above, obviously are (the only) rest points of the ODE system \eqref{profileODE1}.

We first consider the small-amplitude case.
At the bifurcation point $\psi_*$, all four matrices 
$$
A(\psi_*)
=\frac1{\sqrt2}\begin{pmatrix}
 8&-4\sqrt{3}\\
 -4\sqrt{3}&6           
 \end{pmatrix},
\quad
\tilde B_{visc}(\psi_*)
=
\begin{pmatrix}
u^2v^2&-u^3v\\-u^3v&u^4
\end{pmatrix}
=
\frac34
\begin{pmatrix}
1&-\sqrt3 \\
-\sqrt3&3 
\end{pmatrix},
$$
$$
B_{ther}(\psi)
=
\begin{pmatrix}
16u^2v^2&-4uv(4v^2+1)\\ 
-4uv(4v^2+1)&(4v^2+1)^2                                                                                         
\end{pmatrix}
=\begin{pmatrix}
 12& -6\sqrt3\\
 -6\sqrt3&9
 \end{pmatrix},
$$
$$
\begin{aligned}
B_{velo}(\psi)
&=
\begin{pmatrix}
(u^2+v^2)^2 &-2(u^2+v^2)uv\\
-2(u^2+v^2)uv&4u^2v^2
\end{pmatrix}
=
\begin{pmatrix}
4&-2\sqrt3\\
-2\sqrt3&3  
\end{pmatrix}.
\end{aligned}
$$
are positive multiples of orthogonal projectors,
with
$$
\text{ ker $A(\psi_*)=\ $ker $B_{ther}(\psi_*)
    =\ $ker $B_{velo}(\psi_*)$ spanned by }
r=\begin{pmatrix}3\\2\sqrt3
  \end{pmatrix}
$$
and
$$
\text{ker }\tilde B_{visc}(\psi_*)\text{ spanned by }
                                     \begin{pmatrix}
                                     3\\ \sqrt3                        
                                     \end{pmatrix}
\not\parallel r. 
$$
As thus both $\eta\tilde B_{visc}(\psi_*)$ and $\mu B_{ther}(\psi_*)+\nu B_{velo}(\psi_*)$  
are nontrivial multiples of orthogonal projectors with different one-dimensional images, 
the matrix $B(\psi_*)$, and so $B(\psi)$ for any $\psi$ near $\psi_*$, is invertible.  
Consequently, the augmented profile equation 
\be
\psi'=(B(\psi))^{-1}F(\psi,\tilde q),\quad \tilde q'=0
\ee
has a 1+1-dimensional center manifold $\mathcal C$ at $(\psi_*,\tilde q_*=\frac34)$.
For any value of $\tilde q$ above and sufficiently close to $\tilde q_*$,  
the 1-dimensional fibre
$$
\mathcal C_{\tilde q}=\{\psi\in\Psi:(\psi,\tilde q)\in\mathcal C\}
$$ 
contains (cf.\ \cite{MP}, p.\ 242) the nearby rest points $\psi_-(\tilde q),\psi_+(\tilde q)$, and as these are the 
only rest points, the segment of the curve $\mathcal C_{\tilde q}$ between them 
is a single orbit $\hat\psi(\R)$ which is heteroclinic to them -- this orbit is 
the sought after traveling wave!  
A straightforward center manifold reduction (cf.\cite{MP}, pp.\ 245, 246) confirms that it has 
$\hat\psi(-\infty)=\psi_-(\tilde q),\hat\psi(+\infty)=\psi_+(\tilde q)$ and not vice versa. 
This finishes the proof of Assertion (i).

Regarding arbitrary shocks, we first show the following properties of $B$. 
\begin{lemma}
Assume $\mu$ and $\nu$ satisfy the causality condition \eqref{generallycausal}. Then\\
(i) 
$\psi_-(\tilde q)$ is an attractor at least for certain 
values of $\tilde q\in (3/4,1)$, unless \eqref{nuisnu*} holds.\\ 
(ii) If \eqref{nuisnu*} holds, $\psi_-(\tilde q)$ is a hyperbolic saddle
for all values of $\tilde q\in (3/4,1)$.
\end{lemma}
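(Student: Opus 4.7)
The plan is to classify $\psi_-(\tilde q)$ as a rest point of the profile vector field $\psi'=B(\psi)^{-1}F(\psi,\tilde q)$ by analyzing the generalized eigenvalue problem $\det(A(\psi_-)-\lambda B(\psi_-))=0$, which expands to
$$
\lambda^2\,\det B(\psi_-)-\lambda\,C(\psi_-)+\det A(\psi_-)=0,\qquad
C:=A_{00}B_{11}+A_{11}B_{00}-2A_{01}B_{01}.
$$
From \eqref{detA} in the proof of Lemma 1, combined with $v_-^2>1/2$, one has $\det A(\psi_-)=2v_-^2-1>0$, so the product of the two generalized eigenvalues carries the sign of $\det B(\psi_-)$, and the whole classification reduces to understanding $\det B$ at $\psi_-$ plus, in the case $\det B>0$, a trace-type sign.

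The first computational step is to express $\det B$ as a polynomial in $w:=v^2$. Each of $\tilde B_{visc},B_{ther},B_{velo}$ is rank one and factors as an outer product $\vec v\otimes\vec v$ with $\vec a=(uv,-u^2)$, $\vec b=(4uv,-(4v^2+1))$, $\vec c=(u^2+v^2,-2uv)$. The $2\times 2$ identity $\det(\sum_k\lambda_k\vec v_k\otimes\vec v_k)=\sum_{k<l}\lambda_k\lambda_l(\vec v_k\wedge\vec v_l)^2$, together with the short calculations $\vec a\wedge\vec b=3uv$, $\vec a\wedge\vec c=u^2$, $\vec b\wedge\vec c=1-2v^2$ (each using $u^2-v^2=1$), then gives
$$
\det B \;=\; a_4\,w^2+a_2\,w+a_0,
$$
with $a_4=4(\mu\nu-3\eta\mu-\eta\nu/3)$, $a_2<0$, and $a_0=\nu(\mu-4\eta/3)\ge 0$. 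Two features drive the argument: $a_4$ vanishes exactly when $\nu=\nu_*(\eta,\mu)$, and $a_4>0$ on the strict-causality side of \eqref{generallycausal}; and $\det B(w{=}1/2)=-9\eta\mu-3\eta\nu<0$ independently of the parameters.

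Part (ii) then follows at once: at $\nu=\nu_*$ the quadratic degenerates to the affine $a_2 w+a_0$ with $a_2<0$, which is already negative at $w=1/2$ and hence remains strictly negative on $[1/2,\infty)$; combined with $\det A>0$, the two eigenvalues are real of opposite sign, so $\psi_-(\tilde q)$ is a hyperbolic saddle throughout $\tilde q\in(3/4,1)$.

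For part (i), when $\nu\ne\nu_*$ we have $a_4>0$, so $\det B$ is an upward parabola that is negative at $w=1/2$ and tends to $+\infty$, crossing zero at a unique $w_2>1/2$. Since $v_-^2(\tilde q)\to\infty$ as $\tilde q\nearrow 1$, $\det B(\psi_-(\tilde q))>0$ for such $\tilde q$ and the two eigenvalues share a sign; what remains is to pin that sign as negative. I would do this by an asymptotic expansion as $v_-\to\infty$: both $A$ and $B$ become rank one at leading order, $A\sim 6v^3 M$ and $B\sim(\tilde\eta-16\mu-4\nu)v^4 M$ for $M=\bigl(\begin{smallmatrix}1&-1\\-1&1\end{smallmatrix}\bigr)$; projecting the eigenvalue problem onto $\ker M=\langle(1,1)\rangle$ for one root and onto $\mathrm{im}\,M$ for the other, carried one order further in $1/v$, produces two eigenvalues of size $\sim 1/v_-$ with negative sign (one from $\xi^T A\xi\sim 1/(2v)$ and $\xi^T B\xi\to\eta/3-\mu<0$ for $\xi=(1,1)$; the other because $\tilde\eta-16\mu-4\nu<0$ under causality), so $\psi_-(\tilde q)$ is a stable node for $\tilde q$ close to $1$. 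The main obstacle is precisely this asymptotic step: $A$ and $B$ degenerate in the \emph{same} rank-one direction at leading order, so a naive expansion of $B^{-1}A$ is singular, and the correct signs depend on the delicate subleading cancellation in which $\xi^T A\xi$ turns out to be of order $1/v$ rather than $O(v)$.
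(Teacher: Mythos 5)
Your argument has the same skeleton as the paper's: everything is reduced to the signs of $\det B(\psi_-)$ and $\det A(\psi_-)=2v_-^2-1>0$, $\det B$ is written as a quadratic in $w=v^2$ whose leading coefficient vanishes exactly at $\nu=\nu_*$, part (ii) follows because the degenerate (affine) case is negative on $[1/2,\infty)$, and part (i) rests on the leading coefficient being positive so that $\det B(\psi_-(\tilde q))>0$ as $\tilde q\nearrow 1$. Within that skeleton you do two things differently. First, your derivation of the quadratic from the rank-one structure of $\tilde B_{visc},B_{ther},B_{velo}$ via $\det\bigl(\sum_k\lambda_k\vec v_k\otimes\vec v_k\bigr)=\sum_{k<l}\lambda_k\lambda_l(\vec v_k\wedge\vec v_l)^2$ is a genuinely slicker route to the paper's formula \eqref{detBofv}; your coefficients agree with $c_4,c_2,c_0$ after substituting $\tilde\eta=\tfrac43\eta$, and your evaluation $\det B|_{w=1/2}=-9\eta\mu-3\eta\nu$ checks out. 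Second, for the final sign determination in part (i) you take a much harder road than necessary: the paper simply notes that $\operatorname{tr}B<0$ always holds, so $B(\psi_-)$ is negative definite once $\det B(\psi_-)>0$, while $A(\psi_-)$ is positive definite; hence $B^{-1}A$ is similar to $A^{1/2}B^{-1}A^{1/2}$, which is negative definite, and both eigenvalues are negative. Your asymptotic expansion around the common rank-one degeneration of $A$ and $B$ does point at the right answer (the subleading facts $\xi^\top A\xi\sim 1/(2v)$ and $\xi^\top B\xi\to\eta/3-\mu<0$ for $\xi=(1,1)$ are both correct), but as written it is a sketch whose self-declared ``main obstacle'' simply evaporates under the definiteness argument; you should replace it. One further caution: you claim $a_4>0$ ``on the strict-causality side of \eqref{generallycausal}.'' As printed, \eqref{generallycausal} reads $\nu\le\nu_*$, under which $a_4\le0$ and your upward parabola becomes a downward one; the appendix (where causality amounts to $\pi(1)=c_4\ge0$) and the footnote quoting \cite{BDN} show the intended inequality is $\nu\ge\nu_*$, and the paper's proof of (i) likewise works with $\nu>\nu_*$. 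Your reading is the right one, but you should state explicitly that you are using $\nu>\nu_*$, since with the literal $\nu<\nu_*$ assertion (i) would fail.
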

\begin{proof}
Relations \eqref{B}--\eqref{Bvelo} readily yield
$$
B(\psi)=
\begin{pmatrix}
 \tilde\eta u^2v^2-16\mu u^2v^2-\nu(u^2+v^2)^2 &
-uv[\tilde\eta u^2-4\mu(4v^2+1)-2\nu(u^2+v^2)]\\
   -uv[\tilde\eta u^2-4\mu(4v^2+1)-2\nu(u^2+v^2)] &
 \tilde\eta u^4-\mu(4v^2+1)^2-4\nu u^2v^2  
\end{pmatrix},
$$
which entails 
\begin{align}
\det B(\psi)&=-9\tilde\eta\mu(1+v^2)v^2-\tilde\eta\nu(1+v^2)^2+\mu\nu(2v^2-1)^2 \nonumber
\\
&=c_4(\tilde\eta,\mu,\nu)v^4+c_2(\tilde\eta,\mu,\nu)v^2+c_0(\tilde\eta,\mu,\nu)
\label{detBofv}
\end{align}
with
$$
\begin{aligned}
c_4(\tilde\eta,\mu,\nu)&=-9\tilde\eta\mu-\tilde\eta\nu+4\mu\nu\\ 
c_2(\tilde\eta,\mu,\nu)&={\color{red}-9\tilde\eta\mu-2\tilde\eta\nu-4\mu\nu}\\
c_0(\tilde\eta,\mu,\nu)&={\color{red}-\tilde\eta\nu+\mu\nu}.
\end{aligned}
$$
Assume first that \eqref{nuisnu*} holds. In this case,
\begin{align}
c_4(\tilde\eta,\mu,\nu)&=0\nonumber\\ 
c_2(\tilde\eta,\mu,\nu)&={\color{red}-\nu(\tilde\eta+8\mu)}\label{c420}\\
c_0(\tilde\eta,\mu,\nu)&={\color{red}\nu(\mu-\tilde\eta)}.
\end{align}
thus
$ 
\det B(\psi^\pm)=\nu(-(\tilde\eta+8\mu)v^2+(\mu-\tilde\eta))<0
$ 
and, as according to \eqref{detA} $\det A(\psi_-)>0$, also 
$$
\det(B(\psi_-)^{-1}A(\psi_-))<0,
$$
which proves (ii).

As $c_4(\tilde\eta,\mu,\nu_*(\eta,\mu))=0$ and 
$\partial c_4(\tilde\eta,\mu,\nu)/\partial\nu=4\mu-\tilde\eta>0$, 
the leading-order coefficient in \eqref{detBofv} is positive 
whenever
\be\label{nubigger}
\nu>\nu_*(\tilde\eta,\mu).
\ee 
Assuming \eqref{nubigger}, we thus have $\det B(\psi_-) >0$ for sufficiently large $v$, which implies that 
\be\label{detB}
\det B(\psi_-)>0\text{ for any $\tilde q<1$ sufficiently close to 1.}
\ee
Since $B(\psi)$ has negative trace and $A(\psi_-)>0$, 
this shows that the eigenvalues of  
$B(\psi_-)^{-1}$ and thus those of $B(\psi_-)^{-1}A(\psi_-) $ are negative and thus 
\be\label{attr}
\psi_-\text{ is an attractor for these same } \tilde q;
\ee
this verifies (i).
\end{proof}

\emph{Proof of Assertion (ii) of Theorem 1}: This is a direct corollary of
Assertion (i) of Lemma 2, as an
attractor cannot be the $\alpha$-limit of any other state; in particular, no heteroclinic 
orbit can originate in $\psi_-$ for any of the $\tilde q$ in \eqref{detB},\eqref{attr}. 
Cf.\ the second plot in Figure 1. \hfill$\Box$

{\bf Remark 1.} Assertion (ii) of Lemma 2 leaves the possibility that in the sharply causal case 
\eqref{sharplycausal}, $\psi_-(\tilde q)$ is connected to $\psi_+(\tilde q)$ by a heteroclinic 
orbit for \emph{all} choices of $\tilde q\in(3/4,1)$.  

{\bf Remark 2.} We conjecture that the phenomena established in this note have counterparts in 
generalizations of the original BDN model to other fluids as have meanhwile been considered in 
\cite{K,BDN20,F20}.   

\appendix
\section{Strict and sharp causality}
For the reader's convenience, we briefly discuss the causality condition \eqref{generallycausal} 
following \cite{BDN}.\footnote{In their notation, \eqref{generallycausal} says that 
cofficients $\chi$ and $\lambda$, which correspond to what here is $3\mu$ and
$\nu$, must satisfy $\chi=a_1\eta, \lambda\ge 3a_1\eta/(a_1-1)$ for some $a_1\ge 4$. 
See \cite{BDN}, p.\ 10, and \cite{F20}.}
Fix $\tilde\eta,\mu,\nu>0$.
The dispersion relation of $B$ being\footnote{We evaluate it in the fluid's rest frame. This means no 
loss of generality as these modes are neutral, cf.\ the proof of Proposition 1 in \cite{FRT}.} 
$$
\begin{aligned}
0&
=\det\left(\lambda^2 B^{\alpha0\gamma0}
+i\xi\lambda(B^{\alpha0\gamma1}+B^{\alpha1\gamma0})
-\xi^2B^{\alpha1\gamma1}\right)
= \left|\begin{matrix}
         9\mu\lambda^2-\nu\xi^2&i\xi\lambda(3\mu+\nu)\\
         i\xi\lambda(3\mu+\nu)&\nu\lambda^2+(\tilde\eta-\mu)\xi^2      
         \end{matrix}
\right|,
\end{aligned}
$$  
the speeds $\sigma=-i\lambda/\xi$ satisfy
$$
0
=\pi(\sigma^2)=9\mu\nu\sigma^4-3\mu(3\tilde\eta+2\nu)\sigma^2
-\nu(\tilde\eta-\mu).
$$
Looking at the signs of the coefficients and the discriminant of the polynomial $\pi$,
$$
\Delta=\tilde\eta\{81\mu^2\tilde\eta+36\mu\nu(3\mu+\nu)\},
$$ 
one sees that the speeds have $\sigma^2\ge 0$ if 
and only if $\mu\ge \tilde\eta$; we assume this now. 

Observing that 
$$
\pi(1)
=4\mu\nu-(9\mu+\nu)\tilde\eta,
$$
one easily concludes that the speeds satisfy $\sigma^2\le 1$ if and only if 
$$
\nu\le\left(\frac1{3\eta}-\frac1{9\mu}\right)^{-1}
$$
holds, with $\sigma^2=1$ occurring exactly in case the last inequality holds with ``=''.


\begin{thebibliography}{99}

\bibitem{BDN} F. S. Bemfica, M. M. Disconzi, J. Noronha: 
{\it Causality and existence of solutions of relativistic viscous fluid dynamics with gravity},
Phys.\ Rev.\ D {\bf 98} (2018), 104064. 

\bibitem{BDN20} F. S. Bemfica, M. M. Disconzi, J. Noronha: 
{\it General relativistic viscous fluid dynamics}, 2020, {\tt arxiv 2009.11388.}

\bibitem{CS} C. Conley, J. Smoller: {\it On the structure of magnetohydrodynamic shock waves}, 
Comm.\ Pure Appl.\ Math.\ {\bf 27} (1974),  367--375.

\bibitem{D} 
C. M. Dafermos: Hyperbolic Conservation Laws in Continuum Physics. Fourth edition. 
Grundlehren der Mathematischen Wissenschaften 
[Fundamental Principles of Mathematical Sciences], 325. Springer-Verlag, Berlin, 2016.

\bibitem{E}  C. Eckart: {\it The thermodynamics of irreversible processes. 3: Relativistic theory of the simple
fluid}, Phys.\ Rev.\ {\bf 58} (1940), 919-924.

\bibitem{F20} H. Freist\"uhler: {\it A class of Hadamard well-posed five-field theories
of dissipative relativistic fluid dynamics.} J.\ Math.\ Phys.\ {\bf 61} (2020), 033101, 17 pp.


\bibitem{FP20} H. Freist\"uhler, V. Pellhammer: {\it Dependence on the background viscosity of solutions
to a prototypical nonstrictly hyperbolic system of conservation laws}, SIAM J.\ Math.\ Anal.\ {\bf 52} (2020), 5658--5674.

\bibitem{P} H. Freist\"uhler, V. Pellhammer: {\it Existence and bifurcation of shock profiles 
in dissipative relativistic fluid dynamics}, in preparation.

\bibitem{FRT} H. Freist\"uhler, M. Reintjes, B. Temple: 
{\it Decay and subluminality of modes of all wave numbers in the relativistic dynamics of
viscous and heat conductive fluids}, {\tt arxiv 2012.01220.}

\bibitem{FS} H. Freist\"uhler, P. Szmolyan: 
Existence and bifurcation of viscous profiles for all intermediate magnetohydrodynamic shock waves. 
SIAM J. Math.\ Anal.\ {\bf 26} (1995), 112--128. 

\bibitem{FT14} H. Freist\"uhler, B. Temple:
{\it Causal dissipation and shock profiles in the relativistic fluid dynamics of pure radiation}, 
Proc.\ R. Soc.\ A {\bf 470} (2014), 20140055.

\bibitem{FT18} H. Freist\"uhler, B. Temple: 
{\it Causal dissipation in the relativistic dynamics of barotropic fluids},
J. Math.\ Phys.\ {\bf 59} (2018), 063101.

\bibitem{Gi} D. Gilbarg: 
{\it The existence and limit behavior of the one-dimensional shock layer}, 
Amer.\ J. Math.\ {\bf 73} (1951). 256-274. 

\bibitem{HL} W. Israel: {\it Nonstationary irreversible thermodynamics, a causal relativistic 
theory}, Ann.\ Phys.\ {\bf 100} (1976), 310--331. 

\bibitem{K} P. Kovtun: {\it First-order relativistic hydrodynamics is stable}, J. High Energy Phys.
2019, 034, 25 pp.  

\bibitem{Lx} P.D. Lax: 
{\it Hyperbolic systems of conservation laws II}, 
Comm.\ Pure Appl.\ Math.\ {\bf 10} (1957), 537--566.

\bibitem{MP} A. Majda, R. L. Pego: {\it Stable viscosity matrices for systems of conservation laws},
J.\ Differ.\ Eqs.\ {\bf 56} (1985), 229 --262.

\bibitem{RS}
T. Ruggeri, A. Strumia: 
{\it Main field and convex covariant density for quasilinear hyperbolic systems. 
Relativistic fluid dynamics},
Ann.\ Inst.\ H. Poincar\'e Sect. A (N.S.) {\bf 34} (1981), 65--84. 

\bibitem{T} A. H. Taub: {\it Relativistic Rankine-Hugoniot equations}, Phys.\ Rev.\ {\bf 74} (1948),
328--334.

\bibitem{We} H. Weyl: {\it Shock waves in arbitrary fluids},
Comm.\ Pure Appl.\ Math. {\bf 2} (1949), 103--122.

\end{thebibliography}
\end{document}